\newcommand{\mathset}[1]{\left\{#1\right\}}
\newtheorem{theorem}{Theorem}
\newtheorem{lem}{Lemma}
\DeclareMathOperator\closure{cl}
\begin{document}
\title{
 Dimension of Alexandrov Topologies
}
\author{Patrick Erik Bradley and Norbert Paul}

\date{\today}
\maketitle

\begin{abstract}
We prove that the Krull dimension of an  Alexandrov space of finite height
can be characterised 
with the specialisation preorder of its associated $T_0$-space.
\end{abstract}

A topological space is called \emph{Alexandrov}, if the intersection of an arbitrary family of closed subsets is closed.
It is well known that an Alexandrov topology on a set $X$ is determined by
the \emph{specialisation preorder} $\le$ given as:
$$
x\le y:\Leftrightarrow x\in\closure\mathset{y},
$$
and it is well known that the specialisation 
preorder is a partial order if and only if $X$ is a $T_0$-space.

A  topological space $X$ is \emph{irreducible}, if
it is not the union of two proper, non-empty, closed subsets $B,C$:
$$
X=B\cup C\Rightarrow X=B  \;\textrm{or}\; X=C.
$$

For a topological space $X$, there is the \emph{Krull dimension}, defined as 
the supremum of all lengths $n$ of chains
$$
X_0\subset X_1\subset \dots\subset X_n
$$
with proper inclusions of non-empty closed irreducible subsets.
The Krull dimension of the empty space $\emptyset$ is defined as $-1$.
A space is called  \emph{finite dimensional},
if  its Krull dimension is 
a finite number.

\bigskip
For an Alexandrov space $X$, there is the \emph{height}, defined as the supremum
of all lengths $n$ of chains
$$
\closure\mathset{x_0}\subset\dots\subset\closure\mathset{x_n}
$$
with proper inclusions. 
In  case $X$ is a $T_0$-Alexandrov space, then the height coincides with
the supremum of all lengths of  chains
$$
x_0\le \dots\le x_n
$$
where $x_i\neq x_{i+1}$ for $i=0,\dots,n-1$.
An Alexandrov space is said to be \emph{of finite height}, 
if its height is a finite number.

\bigskip
The height and the Krull dimension of an Alexandrov space are related, because
of the following Lemma which holds true in any topological space:

\begin{lem}\label{closureofptisirred}
Let $X$ be a topological space. Then $\closure\mathset{x}$ is irreducible for any
$x\in X$.
\end{lem}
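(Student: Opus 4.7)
The plan is to unpack the definition of irreducibility directly. Suppose $\closure\{x\} = B \cup C$ with $B$ and $C$ closed and non-empty subsets of $\closure\{x\}$; I want to show that one of them equals $\closure\{x\}$. Since $\closure\{x\}$ is itself closed in $X$, the sets $B$ and $C$ may equally well be regarded as closed subsets of $X$.

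The key observation is that the point $x$ must lie on one side of the decomposition. Indeed $x \in \closure\{x\} = B \cup C$, so without loss of generality $x \in B$. First I would record this case split, then use that $B$ is closed to conclude $\closure\{x\} \subseteq \closure B = B$, which combined with $B \subseteq \closure\{x\}$ gives $B = \closure\{x\}$ as required.

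There is no real obstacle: the proof is essentially the single line that $x$ must fall into one of the two closed pieces, and then the closure of $\{x\}$ is contained in that piece. The only thing to be mildly careful about is the convention on closedness in the definition of irreducibility (closed in $\closure\{x\}$ vs.\ closed in $X$), but since $\closure\{x\}$ is itself closed in $X$ the two notions coincide for subsets of $\closure\{x\}$, so no ambiguity arises.
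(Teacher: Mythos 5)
Your proof is correct and follows essentially the same route as the paper: locate $x$ in one of the two closed pieces and invoke the minimality of $\closure\mathset{x}$ among closed sets containing $x$ to conclude that piece is all of $\closure\mathset{x}$. If anything, your version is slightly cleaner than the paper's, which takes a small detour through showing $x$ cannot lie in both pieces.
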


\begin{proof}
Notice that $\closure\mathset{x}$ 
is the smallest closed subset of $X$ containing $x$. Hence, if 
$$
\closure\mathset{x}=B\cup C
$$
is the union of closed subsets $B,C$ of $\closure\mathset{x}$ 
in the subspace topology
with 
$B\not\subseteq C$ and $C\not\subseteq B$, then first of all, 
$B$ and $C$ are also closed in $X$, 
because $\closure\mathset{x}$ is closed in $X$, and closedness is transitive:
$F$ closed subset of $G$, and $G$ closed subset of $H$ implies that 
$F$ is a closed subset of $H$. 
Consequently, $x$ cannot lie in both $B$ and $C$, because $\closure\mathset{x}$
is the smallest closed subset of $X$ containing $a$.
Hence, we may assume that $x\in B$ and $x\notin C$.
But then $x$ 
is contained in the proper subset $B$ of $\closure\mathset{x}$ 
which is closed in $X$.
This cannot be, because $\closure\mathset{x}$ is the smallest closed
subset of $X$ containing $a$. 
This proves that $\closure\mathset{x}$ is irreducible.
\end{proof}

If the height of an Alexandrov space is finite, then there is a simple
characterisation of irreducible subsets.

\begin{lem}\label{irred=clpt}
A closed subset $A$ of an  Alexandrov space
$X$ of finite height is irreducible if and only if 
$$
A=\closure\mathset{a},
$$
i.e.\ $A$ is the closure of a point $a\in A$.
\end{lem}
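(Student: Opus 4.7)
The $(\Leftarrow)$ direction is immediate from Lemma~\ref{closureofptisirred}, so the work is in $(\Rightarrow)$. Assume $A\subseteq X$ is a non-empty closed irreducible subset; the goal is to produce a single point $a\in A$ with $A=\closure\mathset{a}$. The plan is to consider the family $\mathcal{F}=\mathpset{\closure\mathset{b}}{b\in A}$ ordered by inclusion and to collapse the tautological decomposition $A=\bigcup_{b\in A}\closure\mathset{b}$ by combining finite height (to produce maximal elements of $\mathcal{F}$) with the Alexandrov property (to make the right-hand side of an irreducibility test actually closed).

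First I would use finite height: any strictly ascending chain in $\mathcal{F}$ has bounded length, so every member of $\mathcal{F}$ lies below some inclusion-maximal element. Let $(C_i)_{i\in I}$ enumerate the pairwise distinct maximal members, and for each $i$ pick a $b_i\in A$ with $C_i=\closure\mathset{b_i}$. By the preceding observation every $c\in A$ is contained in some $C_i$, so $A=\bigcup_{i\in I} C_i$.

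Next, fix any $j\in I$ and write
$$
A\;=\;C_j\;\cup\;\bigcup_{i\in I\setminus\mathset{j}} C_i.
$$
The second term is a union of closed sets, so the Alexandrov hypothesis makes it closed. Irreducibility of $A$ then forces one of the two parts to equal $A$. If $A=C_j$, we are done with $a=b_j$. Otherwise $b_j$ lies in some $C_i$ with $i\ne j$, giving $C_j\subseteq C_i$, which contradicts both the maximality of $C_j$ and the distinctness of the $C_i$.

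I expect the main obstacle to be recognizing precisely where each hypothesis is needed: finite height is what produces the maximal $C_i$ in the first place, while the Alexandrov property is exactly what turns $\bigcup_{i\ne j}C_i$ into a closed set, making it a legitimate ingredient in an irreducibility argument. Without either assumption the decomposition step breaks down.
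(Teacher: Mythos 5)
Your proof is correct, and it takes a genuinely different route from the paper's. You decompose $A$ into the inclusion-maximal point-closures $C_i=\closure\mathset{b_i}$ (finite height guaranteeing, via the ascending chain condition, that every $\closure\mathset{c}$ with $c\in A$ lies under some maximal $C_i$), then peel off a single $C_j$ and use the Alexandrov property to make $\bigcup_{i\ne j}C_i$ closed, so that irreducibility forces $A=C_j$. The paper instead argues by contradiction from the decomposition $A=\closure\left(A\setminus\closure\mathset{x}\right)\cup\closure\mathset{x}$: irreducibility gives $A=\closure\left(A\setminus\closure\mathset{x}\right)$ for every $x\in A$, the minimal open neighbourhood $U_x=\mathpset{u\in X}{x\le u}$ then produces some $y\ge x$ outside $\closure\mathset{x}$, finite height yields a maximal such $a$, and the same decomposition applied to $a$ gives the contradiction. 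So the two arguments invoke the Alexandrov hypothesis through different (equivalent) faces --- yours via ``arbitrary unions of closed sets are closed'', the paper's via the existence of minimal open neighbourhoods --- and both use finite height only to manufacture maximal elements, yours in the poset of point-closures, the paper's inside $A\setminus\closure\mathset{x}$. Your version runs the irreducibility test exactly once against an explicitly closed set, which is arguably cleaner; the only points worth making explicit in a final write-up are that each $C_i\subseteq A$ (so both pieces are closed subsets of the subspace $A$), and the degenerate case where $I=\mathset{j}$ is a singleton, in which $A=C_j$ holds outright.
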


\begin{proof}
In Lemma \ref{closureofptisirred}, it was shown that $\closure\mathset{a}$
is irreducible.

Let the closed set $A$ be irreducible.
If $A\neq\closure\mathset{x}$ for any $x\in A$, then 
$$
A=\closure\left(A\setminus\closure\mathset{x}\right)\cup\closure\mathset{x}
$$
is the union of two proper non-empty closed subsets. Hence, for all $x\in A$:
$$
A=\closure\left(A\setminus\closure\mathset{x}\right),
$$
because $A\neq\closure\mathset{x}$ and $A$ is irreducible.
Since $x\in A$, it follows that the open set $U_x:=\mathset{u\in X\mid x\le u}$
has a non-empty intersection with $A\setminus\closure\mathset{x}$. In other words,
there exists  $y\in A\setminus\closure\mathset{x}$ such that $y\ge x$.
Now $A\setminus\closure\mathset{x}$  contains  
a maximal element
$a$ such that $a\ge x$. It is maximal in the sense that
for all  $b\in A\setminus\closure\mathset{x}$
with $b\ge a$ it holds true that
 $\closure\mathset{b}=\closure\mathset{a}$.
Otherwise there would be an infinite
 ascending chain
$$
\closure\mathset{a}\subset\closure\mathset{b}\subset\closure\mathset{c}\subset\dots
$$
with strict inclusions, where 
$a,b,c,\ldots\in A\setminus\closure\mathset{x}$.
This contradicts the finiteness of the height of $X$.
So, from 
$$
A=\closure(A\setminus\closure\mathset{a}),
$$
it follows that there is 
$b\in A\setminus\closure\mathset{a}\subseteq A\setminus\closure\mathset{x}$
such that $b\ge a$. Hence, by the maximality property of $a$, 
it follows that $\closure\mathset{a}=\closure\mathset{b}$ which cannot be,
as otherwise $b\in A\setminus\closure\mathset{b}$.
\end{proof}

Let $X$ be a topological space which is
 Alexandrov with specialisation preorder $\le$. 
There is a natural equivalence relation $\sim$ on $X$:
$$
x\sim y:\Leftrightarrow x\le y\;\textrm{and}\; y\le x,
$$
and let $X_0:=X/\sim$ be the Kolmogorov quotient.
It is a $T_0$-space,  its induced
specialisation preorder
is a partial order. 
There is a canonical map $\pi\colon X\to X_0$ which takes 
each $x\in X$ to its equivalence class.
It has the important property:
\begin{equation} \label{importantproperty}
x\le y\Leftrightarrow \pi(x)\le\pi(y)
\end{equation}

\begin{lem}\label{pitakescl2cl}
The map $\pi\colon X\to X_0$ induces a bijection between  sets
\begin{equation}\label{thebijection}
\mathset{\closure\mathset{x}\mid x\in X}
\cong
\mathset{\closure\mathset{x_0}\mid x_0\in X_0}
\end{equation}
\end{lem}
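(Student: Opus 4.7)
The plan is to define the candidate bijection
$$
\Phi\colon\mathset{\closure\mathset{x}\mid x\in X}\to\mathset{\closure\mathset{x_0}\mid x_0\in X_0},\qquad \Phi(\closure\mathset{x}):=\closure\mathset{\pi(x)},
$$
and to establish well-definedness and injectivity simultaneously via a single chain of equivalences, after which surjectivity is immediate.

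First I would observe that for any topological space, the specialisation preorder characterises when two point-closures coincide: $\closure\mathset{x}=\closure\mathset{y}$ if and only if $x\le y$ and $y\le x$. This is just unfolding the definition $x\le y\Leftrightarrow x\in\closure\mathset{y}$ together with the fact that $\closure\mathset{z}$ is the smallest closed set containing $z$. I would apply this observation both in $X$ (where $\le$ is only a preorder) and in $X_0$ (where it is a partial order, so the condition collapses to plain equality $\pi(x)=\pi(y)$).

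Then I would use property \eqref{importantproperty} to stitch together the equivalences:
$$
\closure\mathset{x}=\closure\mathset{y}
\iff x\le y \;\text{and}\; y\le x
\iff \pi(x)\le\pi(y)\;\text{and}\;\pi(y)\le\pi(x)
\iff \pi(x)=\pi(y)
\iff \closure\mathset{\pi(x)}=\closure\mathset{\pi(y)}.
$$
The ``$\Rightarrow$'' direction of the first and last equivalences shows $\Phi$ is well defined; the full biconditional then shows $\Phi$ is injective. For surjectivity, given $x_0\in X_0$, pick any $x\in\pi^{-1}(x_0)$; then $\closure\mathset{x_0}=\closure\mathset{\pi(x)}=\Phi(\closure\mathset{x})$.

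There is no real obstacle here; the whole argument rests on the stated property \eqref{importantproperty} of $\pi$ together with the $T_0$-ness of $X_0$. The only point that requires a moment of care is remembering that $\closure\mathset{x}=\closure\mathset{y}$ is equivalent to $x\sim y$ rather than to $x=y$ in $X$, since $X$ need not be $T_0$; this is exactly what makes the map $\Phi$ genuinely factor through the Kolmogorov quotient.
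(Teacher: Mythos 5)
Your proof is correct, and it reaches the bijection by a genuinely different decomposition than the paper's. The paper realises the bijection as the direct-image map $A\mapsto\pi(A)$ and proves two set identities, $\pi(\closure\mathset{x})=\closure\mathset{\pi(x)}$ (one inclusion by continuity of $\pi$, the other by (\ref{importantproperty})) and $\pi^{-1}(\closure\mathset{\pi(x)})=\closure\mathset{x}$; since that map is defined on the sets themselves there is no well-definedness to check, the first identity together with surjectivity of $\pi$ gives surjectivity, and the second exhibits $B\mapsto\pi^{-1}(B)$ as an inverse, hence injectivity. You instead define the map on representatives and reduce everything to order theory: the equivalence $\closure\mathset{x}=\closure\mathset{y}\Leftrightarrow(x\le y\text{ and }y\le x)$, property (\ref{importantproperty}), and antisymmetry in the $T_0$ quotient. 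This is leaner --- you never invoke continuity or preimages of closed sets --- at the small cost of an explicit well-definedness check (your phrase ``the first and last equivalences'' undersells this: the forward direction of the entire chain is what is used, but that is a wording quibble, not a gap). One thing the paper's version makes visible that yours leaves implicit is that the bijection is implemented by $\pi$ and $\pi^{-1}$ and therefore preserves inclusions in both directions; that order-isomorphism property, not the bare bijection, is what the Theorem later needs in order to equate the Krull dimensions of $X$ and $X_0$. Your chain delivers it just as easily --- replace $=$ by $\subseteq$ throughout --- and it would be worth recording.
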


\begin{proof}
First, observe that $\pi(\closure\mathset{x})=\closure\mathset{\pi(x)}$. 
Namely, $y\in\closure\mathset{x}$ implies $\pi(y)\in\closure\mathset{\pi(x)}$
by continuity, and
$\pi(y)\in\closure\mathset{\pi(x)}$ implies $\pi(y)\le\pi(x)$,
from which it follows by (\ref{importantproperty}) 
that $y\le x$, i.e.\ $x\in\closure\mathset{y}$.

Secondly, observe that $\pi^{-1}(\closure\mathset{\pi(x)})=\closure\mathset{x}$.
Here, the inclusion $\supseteq$ is clear, because the left hand side is closed
and contains $x$. For the other inclusion $\subseteq$: 
$y\in\pi^{-1}(\closure\mathset{\pi(x)})$ implies $\pi(y)\le \pi(x)$,
hence $y\le x$ by (\ref{importantproperty}), i.e.\ $y\in\closure\mathset{x}$.

By those two observations, we have established the bijection
(\ref{thebijection}).
\end{proof}

\begin{theorem}
An Alexandrov space $X$ is finite dimensional if and only if it is
of finite height. In this case, the Krull dimension and the height of $X$
coincide.
Furthermore, this number equals the height and the Krull dimension
 of the Kolmogorov
quotient of $X$.
\end{theorem}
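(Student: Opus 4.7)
The plan is to split the theorem into three parts: (i) show that height equals Krull dimension whenever either is finite, (ii) deduce the equivalence between finite-dimensionality and finite height, and (iii) transfer both invariants to the Kolmogorov quotient via Lemma \ref{pitakescl2cl}.

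For (i), I would first note that Lemma \ref{closureofptisirred} gives the inequality $\mathrm{height}(X) \le \dim X$ in \emph{any} topological space: every chain $\closure\{x_0\} \subset \dots \subset \closure\{x_n\}$ is already a chain of irreducible closed subsets. From this alone, finite Krull dimension forces finite height. Conversely, assuming $X$ has finite height, Lemma \ref{irred=clpt} says every irreducible closed subset is of the form $\closure\{a\}$, so any chain of irreducible closed subsets is a chain of point-closures, giving $\dim X \le \mathrm{height}(X)$. Combining both inequalities settles (i) and (ii) simultaneously.

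For (iii), I would use Lemma \ref{pitakescl2cl}: the map $\closure\{x\} \mapsto \closure\{\pi(x)\}$ is a bijection between the two sets of point-closures. The key additional observation is that this bijection is a poset isomorphism under inclusion. This follows by chaining equivalences: $\closure\{x\} \subseteq \closure\{y\}$ iff $x \le y$ iff $\pi(x) \le \pi(y)$ iff $\closure\{\pi(x)\} \subseteq \closure\{\pi(y)\}$, where the middle equivalence is exactly (\ref{importantproperty}). Because a bijection that preserves and reflects inclusion also preserves and reflects \emph{strict} inclusion, chains of point-closures in $X$ correspond bijectively to chains of point-closures in $X_0$ of the same length, so $\mathrm{height}(X) = \mathrm{height}(X_0)$.

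It remains to observe that $X_0$ is itself Alexandrov — which follows because $\pi^{-1}$ commutes with arbitrary intersections and $\pi^{-1}$ of closed sets are closed — so that (i) applies to $X_0$ as well, yielding $\dim X_0 = \mathrm{height}(X_0)$. The main obstacle, modest as it is, is verifying that the bijection of Lemma \ref{pitakescl2cl} is order-reflecting in addition to order-preserving; everything else is a direct application of the preceding lemmas.
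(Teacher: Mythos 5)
Your proposal is correct and follows essentially the same route as the paper: height $\le$ dimension via Lemma \ref{closureofptisirred}, dimension $\le$ height via Lemma \ref{irred=clpt}, and transfer to the Kolmogorov quotient via Lemma \ref{pitakescl2cl}. You actually supply two details the paper leaves implicit — that the bijection of Lemma \ref{pitakescl2cl} is an order isomorphism (hence preserves strict chains), and that $X_0$ is itself Alexandrov of the same height so the first part applies to it — both of which are verified correctly.
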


\begin{proof}
Clearly, if $X$ is of finite dimension, then $X$ is of finite height.
The converse assertion is an immediate consequence of Lemma \ref{irred=clpt},
from which it also follows that the Krull dimension and height coincide, if
they are finite.
The last assertion follows from the  
fact that the height of $X$ equals
the height of $X_0$, and that the Krull dimension of $X$ equals the Krull 
dimension of $X_0$. This latter statement follows from 
Lemma \ref{pitakescl2cl}.
\end{proof}

\end{document}